\newcommand{\R}{\mathbb{R}}
\newcommand{\N}{\mathbb{N}}
\newcommand{\dist}{\mathrm{dist}}
\newcommand{\Cheb}{\mathrm{Cheb}}
\newcommand{\Oc}{\mathcal{O}}
\DeclareMathOperator*{\argmin}{arg\,min}
\newtheorem{experiment}{\sc Experiment}
{\bf}{\it}
\begin{document}

\title*{High-order numerical integration on regular embedded surfaces}

\author{Gentian Zavalani and Michael Hecht}
\institute{Gentian Zavalani \at Center for Advanced Systems Understanding (CASUS), G\"{o}rlitz, Germany \at Technical University  Dresden, Faculty of Mathematics, Dresden, Germany \email{g.zavalani@hzdr.de}
\and Michael Hecht \at Center for Advanced Systems Understanding (CASUS), G\"{o}rlitz, Germany \at University of Wroclaw, Mathematical Institute,  Wroclaw, Poland \email{m.hecht@hzdr.de}}

\maketitle
\vspace{-8em} 
\abstract{
We present a high-order surface quadrature (HOSQ) for accurately approximating regular surface integrals on closed surfaces. The initial step of our approach rests on exploiting 
\emph{square-squeezing}--a homeomorphic bilinear square-simplex transformation, re-parametrizing any surface triangulation to a quadrilateral mesh. For each resulting quadrilateral domain we interpolate the geometry
by tensor polynomials in  Chebyshev--Lobatto grids. Posterior the tensor-product Clenshaw-Curtis quadrature is applied to compute the resulting integral. We  demonstrate efficiency, fast runtime performance, high-order accuracy, and robustness for complex geometries. 
}
\begin{keywords}
high-order integration, spectral differentiation, numerical quadrature, quadrilateral mesh
\end{keywords}

\section{Introduction}
Efficient numerical integration of surface integrals is an important ingredient in applications ranging from \emph{geometric processing}~\cite{Lachaud16}, \emph{surface--interface and colloidal sciences} \cite{zhou2005surface}, optimization of production processes \cite{Gregg_1967,riviere2009} to surface finite element methods, solving partial differential equations on curved surfaces. Given a conforming triangulation of a $2$-dimensional $C^{r+1}$-surface $S$, $r \geq 0$, a finite family
of maps $\varrho_i$ and corresponding sets $V_i \subset S$, $i=1,\dots,K$
such that
\begin{equation*}
    \varrho_i : \triangle_2 \to V_i \subseteq S\,, \quad \bigcup_{i=1}^K
    \overline{V_i} = S\,,
    \quad \bigcap_{i\neq j}  V_i \cap V_j = \emptyset\,,
\end{equation*}
and the restrictions of the $\varrho_i$ to the interior of the standard simplex
$\mathring \triangle_2$ are diffeomorphisms. 
For immersed manifolds $S \subseteq \R^3$ we will write
$D\varrho_i(\mathrm{x}) : \R^2 \to \R^3$ for the
Jacobian of the parametrization $\varrho_i$ at $\mathrm{x}$. The surface integral
of an integrable function
$f : S \to \R$ with $f \in C^r$, for each $V_i\in S$ is
\begin{equation}\label{eq:SI}
    \int_S f \,dS = \sum_{i=1}^K\int_{\triangle_2} f(\varrho_i(\mathrm{x})) g_i(\mathrm{x})\,d\mathrm{x}\,,
\end{equation}
here, the volume element is expressed as $g_i(\mathrm{x})=\sqrt{\det((D\varrho_i)^T(\mathrm{x}) D\varrho_i(\mathrm{x}))}$.

Triangulations of an embedded
manifold $S$ are frequently given as a set of flat triangles
in the embedding space $\R^3$, together with local
projections from these simplices onto $S$.  More formally, let
\begin{equation}\label{eq:triang}
 T_i \subseteq \R^3\,,
 \qquad
 i = 1,\dots,K
\end{equation}
be a set of flat triangles.
For each triangle $T_i$, we assume that there is a well-defined $C^{r+1}$-embedding $\pi_i : T_i \to S$ and an invertible affine transformation
$\tau_i : \triangle_2 \to T_i$, such that the maps $\varrho_i=\pi_i \circ \tau_i : \triangle_2 \to S$ form a conforming triangulation. Commonly, \emph{the closest-point
 projection}
\begin{equation*}
    \pi^* : \mathcal{N}_{\delta}(S) \to S\,, \quad \pi^*(\mathrm{x}) = \argmin_{\mathrm{y} \in S}\dist(\mathrm{x},\mathrm{y})
\end{equation*}
serves as a realization of the $\pi_i$.
Recall from \cite{geometricpde} that
given an open neighborhood $\mathcal{N}_{\delta}(S)=\{\mathbf{x}\in \mathbb R^{m} :  \mathrm{dist}(\mathbf{x},S)<\delta\}$ of a $C^{r+1}$-surface $S$, $r \geq 2$, with $\delta$ bounded by the reciprocal of the maximum of all principal curvatures on $S$, the closest-point projection
is well-defined on $\mathcal{N}_{\delta}(S)$ and of regularity $\pi^*\in C^{r-1}(T,S)$. 

Especially when it comes to handling complex geometries, 
in practice, simplex meshes are
typically much easier to obtain ~\cite{Persson}. Here, we extend our former approach  \cite{zavalani2023highorder}, involving the homeomorphic re-parametrization 
\begin{equation}
    \sigma_* : \square_2 \to \triangle_2 \,, \quad \triangle_2 = \Big\{\mathrm{x} \in \R^2: x_1,x_2 \geq 0 \,,\sum_{i=1}^2|x_i|\leq 1\Big\}\,, \square_2= [-1,1]^2 
\end{equation}
referred to as \emph{square-squeezing}, for re-parametrizing initial triangular meshes to quadrilateral ones. This enables pulling back interpolation and integration tasks from the standard 2-simplex $\triangle_2$ to the square $\square_2= [-1,1]^2$.

\section{Square-squeezing re-parametrization}
The main difficulty for providing a numerical approximation of \eqref{eq:SI} is obtaining the unknown derivatives $D\varrho_i$ that appear in the volume element.
One classic approach, followed also by \cite{dziuk2013finite}, is to replace the Jacobians $D\varrho_i$ by the Jacobians of a polynomial approximation, typically obtained by interpolation on a set of interpolation points in $\triangle_2$.
However, the question of how to distribute nodes in simplices in order to enable stable high-order polynomial interpolation is still not fully answered \cite{CHEN1995405,taylor2000generalized}.

To bypass this issue, we instead propose to re-parametrize the curved simplices $S_i$ over the square $\square_2=[-1,1]^2$. However, the prominent \emph{Duffy transformation} \cite{Duffy82} 
\begin{equation}\label{eq.duffy}
    \sigma_\text{Duffy}: \square_2 \to \triangle_2\,,
    \quad
    \sigma_{\text{Duffy}}(x,y) = \Big(\frac{1}{4}\left(1+x\right)\left(1-y\right),\frac{1+y}{2}\Big)\,,
\end{equation}
collapses the entire upper edge of the square to a single vertex, which renders it to be no homeomorphism and, thus, to be infeasible for the re-parametrization task.

To introduce the alternate \emph{square-squeezing}, we re-scale $\square_2$ to $\square_2^u =[0,1]^2$, by setting $x_1 = (x+1)/2$ and $x_2 = (y+1)/2$, and define
\begin{equation}\label{eq.rec-tri}
\sigma: \square_2 \to \triangle_2\,,
\quad \sigma(x_1,x_2) = \Big(x_1-\frac{x_1x_2}{2},x_2-\frac{x_1x_2}{2}\Big)^T.
\end{equation}
The inverse map $\sigma^{-1} : \triangle_2 \to \square_2$ is given by
\begin{equation}\label{eq:inv_ss}
\sigma^{-1}(u,v)
=
\begin{pmatrix}
1+\left(u-v\right)-\sqrt{\left(u-v\right)^2+4\left(1-u-v\right)} \\
1-\left(u-v\right)-\sqrt{\left(u-v\right)^2+4\left(1-u-v\right)}
\end{pmatrix}.
\end{equation}
Both $\sigma$ and $\sigma^{-1}$ are continuous, rendering square-squeezing to be a homeomorphism on the closed set $\square_2$, see Fig.~\eqref{fig:equi_simplex-square}, and even a smooth diffeomorphism in the interior, see \cite{zavalani2023highorder} for further details.

\begin{definition}[Quadrilateral re-parametrization]
\label{def:cubical_reparametrization} Let $S$ be a $C^{r+1}$-surface, $r\geq 0$, $\varrho_i=\pi_i \circ \tau_i : \triangle_2 \to S$ a conforming triangulation \eqref{eq:triang}, and  $\sigma : \square_2 \to \triangle_2$ be a 
homeomorphism whose restriction
$\sigma_{|\mathring{\square}_2} : \mathring{\square}_2 \to \mathring{\triangle}_2$ to the interior is a $C^r$-diffeomorphism.  We call
\begin{equation}\label{eq:mesh}
\varphi_i : \square_2 \to S\,,
\quad
\varphi_i = \varrho_i \circ \sigma = \pi_i \circ \tau_i \circ \sigma\,,
\quad
i=1,\dots,K
\,,
\end{equation}
an $r$-regular quadrilateral re-parametrization, illustrated in Fig.~\ref{fig:app_frame}.
\end{definition}

\begin{figure*}[!t]
  \begin{subfigure}{0.33\textwidth}
  \centering
    \includegraphics[width=1.0\linewidth]{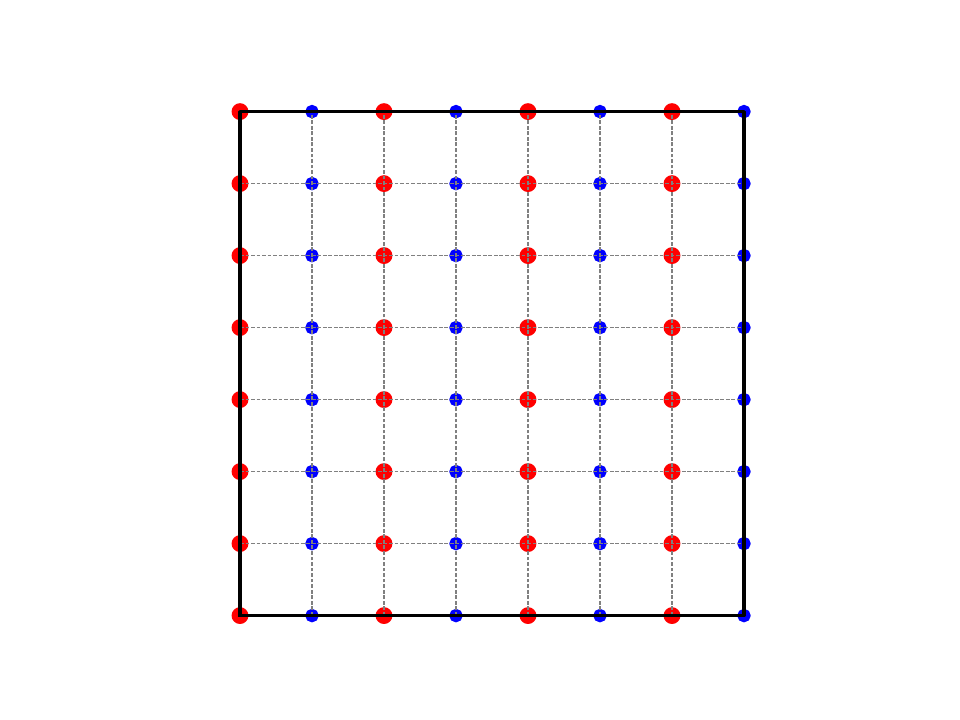}
    \caption{Standard square}
    \label{fig:f111}
  \end{subfigure}%
  \hfill
  \begin{subfigure}{0.33\textwidth}
  \centering
    \includegraphics[width=1.0\linewidth]{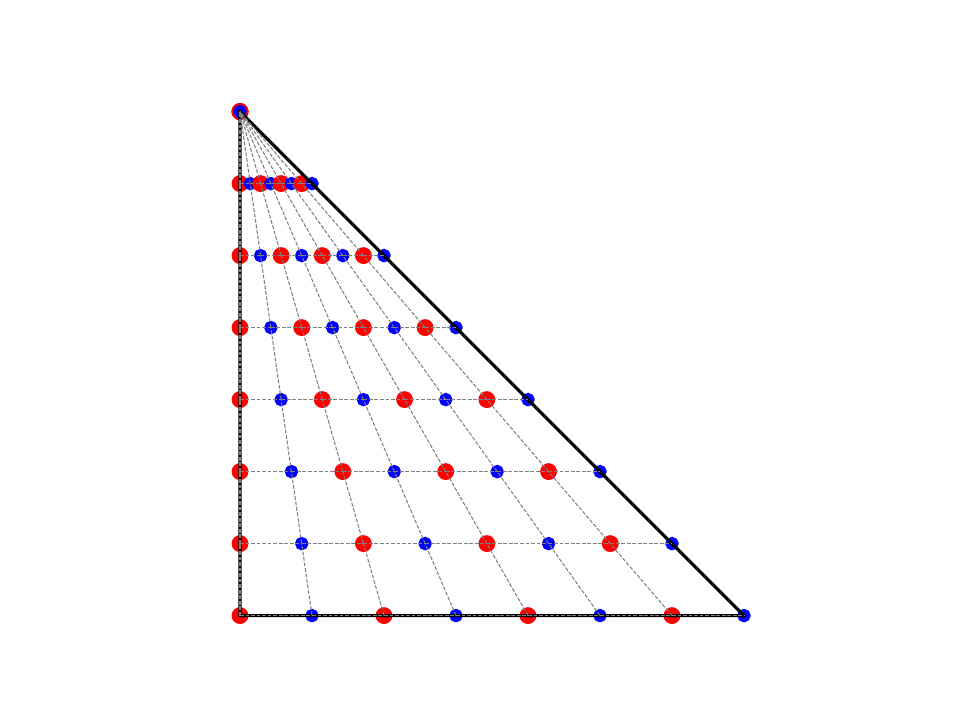}
    \caption{Duffy's transformation}
    \label{fig:f222}
  \end{subfigure}%
  \hfill
  \begin{subfigure}{0.33\textwidth}
  \centering
    \includegraphics[width=1.0\linewidth]{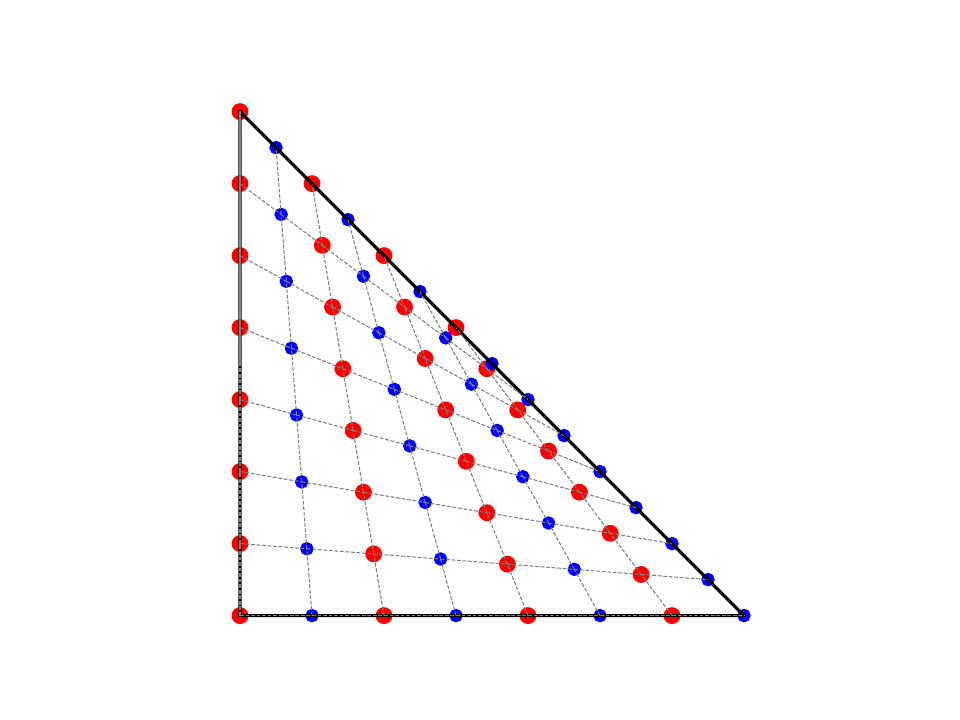}
    \caption{Square-Squeezing}
    \label{fig:f333}
  \end{subfigure}
  \caption{Bilinear square--simplex transformations:
  Deformations of equidistant grids,  under Duffy's transformation (b) and square-squeezing  (c)}
  \label{fig:equi_simplex-square}
\end{figure*}

\begin{figure}[t!]
    \centering
    \begin{tikzpicture}
        \node[inner sep=0pt] at (0,0) {\includegraphics[clip,width=1.0\columnwidth]{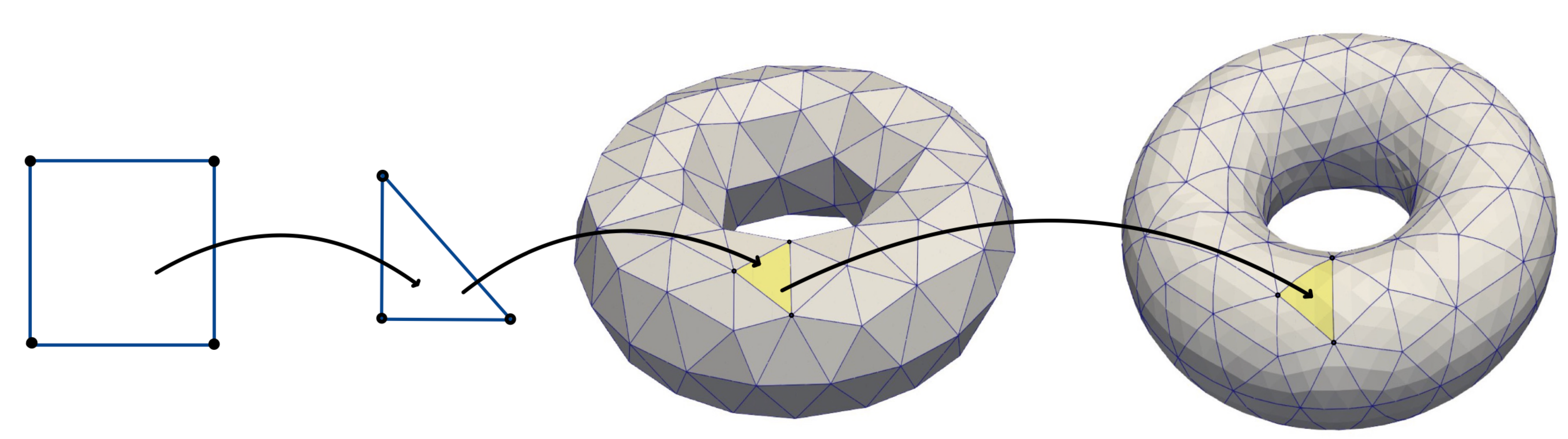}};
        
    
\node[anchor=north west] at (0.01,0.1) {
$T_i\,$};
        \node[anchor=north west] at (2.1,0.5) {$\pi_i$};

        \node[anchor=north west] at (4.2,0.1) {$V_i\,$};

        \node[anchor=north west] at (-2.2,0.3) {$\tau_i$};
        \node[anchor=north west] at (-3.3,-0.45) {$\triangle_2$};
        \node[anchor=north west] at (-4.2,0.32) {$\sigma$};
        \node[anchor=north west] at (-6.0,-0.4) {$\square_2$};
    \end{tikzpicture}
    \caption{Construction of a surface parametrization over $\triangle_2$ by
closest-point projection from a piecewise affine approximate
mesh, and re-parametrization over the square $\Box_2$.}
    \label{fig:app_frame}
\end{figure}

The  quadrilateral re-parametrization enables interpolating the 
geometry functions  $\varphi_i = \varrho_i \circ \sigma : \square_2 \to \R^3$ by 
tensor-product polynomials, as we assert next.

\section{Interpolation in the square}
Our high-order format is based on interpolation in tensor-product Chebyshev–Lobatto nodes in the square $\square_2$. We start by defining:
\begin{definition}[Lagrange polynomials \cite{MIP,minterpy}]
\label{def:LN}
Let $n\in \N$ and $G_{2,n} = \oplus_{i=1}^2 \Cheb_{n}$ be the tensorial Chebyshev–Lobatto grid, where $\Cheb_n = \left\{ \cos\left(\frac{k\pi}{n}\right) \; : \; 0 \leq k \leq n\right\}$,
indexed by a multi-index set $A_{2,n}=\{\alpha \in \N^2 : \|\alpha\|_\infty \leq n\}^2$.
For each $\alpha \in A_{2,n}$, the tensorial multivariate Lagrange polynomials are
\begin{equation}\label{eq:L}
  L_{\alpha}(x)= \prod_{i=1}^2 l_{\alpha_i,i} (x) \,, \quad l_{j,i} (x) = \prod_{k=0, k \neq j}^n \frac{x_i-p_{k,i}}{p_{j,i} - p_{k,i}}  \,.
\end{equation}
\end{definition}
The Lagrange polynomials are a basis of the polynomial space $\Pi_{2,n} =\mathrm{span}\{\mathrm{x}^\alpha = x_1^{\alpha_1}x_2^{\alpha_2}: \alpha \in A_{2,n}\}$ induced by $A_{2,n}$.
Since the $L_\alpha$ satisfy
$ L_\alpha (p_\beta) = \delta_{\alpha,\beta}$
for all $\alpha,\beta \in A_{2,n}$, $p_{\alpha} \in G_{2,n}$,
we deduce that
given a function $f :\square_2\to \R$, the interpolant $Q_{G_{2,n}}f \in \Pi_{2,n}$ of $f$ in $G_{2,n}$ is
\begin{equation}
    Q_{G_{2,n}}f = \sum_{\alpha \in A_{2,n}}f(p_\alpha)L_{\alpha}.
\end{equation}

\begin{definition}[$k^{\text{th}}$-order quadrilateral re-parametrization]
Given an  $r$-regular quadrilateral re-parametrization $\varphi_i : \square_2 \to S\,,
\quad
\varphi_i = \varrho_i \circ \sigma = \pi_i \circ \tau_i\circ \sigma\,,
\quad
i=1,\dots,K
\,,$ mesh \eqref{eq:mesh}. We say that the mesh is of order $k$ if each element has been provided as a set of nodes $\{\varphi_{i}(p_\alpha)\}_{\alpha\in A_{2,k}} $ sampled at $\{p_\alpha\}_{\alpha\in A_{2,k}}$. 
\end{definition}

This implies that on each element, we can approximate the $r$-regular quadrilateral re-parametrization maps through interpolation using the nodes $\{\varphi_{i}(p_\alpha)\}_{\alpha\in A_{2,k}}$. This involves computing a $k^{\text{th}}$-order vector-valued polynomial approximation:
\begin{equation}\label{main.poly}
    Q_{G_{2,k}}\varphi_{i}\left(\mathrm{x}\right) =\sum_{\alpha \in A_{2,k}}\varphi_{i}(p_\alpha)L_{\alpha} ,\quad i=1,\ldots, K
\end{equation}

The $r$-regular quadrilateral re-parametrization maps partial derivatives can be computed using numerical spectral differentiation \cite{trefethen2000spectral}. These derivatives are then utilized to create an approximation of the metric tensor, $g_i$, for each element.

To obtain partial derivatives, forming the Jacobian matrix $DQ_{d,k}\varphi_i$ on the reference square \(\square_2\), Kronecker products are employed. Let \(D \in \mathbb{C}^{(k+1) \times (k+1)}\) be the one-dimensional spectral differentiation matrix \cite{trefethen2000spectral} associated with Chebyshev–Lobatto nodes on the interval \([-1,1]\), and let \(I \in \mathbb{C}^{(k+1) \times (k+1)}\) be the identity matrix. The differentiation matrices in the \(x\) and \(y\) directions on the reference square \(\square_2\) are defined as follows:
\begin{equation}
    D_{\mathrm{x}}Q_{G_{2,k}}\varphi_{i} = D \otimes I, \quad D_{\mathrm{y}}Q_{G_{2,k}}\varphi_{i} = I \otimes D,
\end{equation}
where \(D_{\mathrm{x}}Q_{G_{2,k}}\varphi_{i}, D_{\mathrm{y}}Q_{G_{2,k}}\varphi_{i} \in \mathbb{C}^{(k+1)^2 \times (k+1)^2}\).

Representing the Jacobian as $DQ_{d,k}\varphi_i = [D_{\mathrm{x}}Q_{d,k}\varphi_i\,|\, D_{\mathrm{y}}Q_{d,k}\varphi_i]$, the ingredients above realise the HOSQ, computing the surface integral as:
\begin{align}
\int_S f\,dS &\approx \sum_{i=1}^K\int_{\square_2} f(\varphi(\mathrm{x}))\sqrt{\det((DQ_{d,k}\varphi_i(\mathrm{x}))^T DQ_{d,k}\varphi_i(\mathrm{x}))}\,d\mathrm{x}\label{eq:HOSQ1}\\ 
&\approx
\sum_{i=1}^K \sum_{\mathrm{p} \in P}\omega_{\mathrm{p}}f(\varphi_i(\mathrm{p}))
\sqrt{\det((DQ_{d,k}\varphi_i(p))^T DQ_{d,k}\varphi_i(p))}\,, \label{eq:HOSQ2}
\end{align}
where $\mathrm{p}\in P$, $\omega_\mathrm{p} \in \R^+$ can be the points and weights of any quadrature of $\square_2$  e.g. the Gauss-Legendre or Clenshaw-Curtis quadrature  \cite{Xiang2012OnTC,Trefethen2019}. 

It is important to note that computing nodes and weights for a $n$-point Gauss-Legendre rule requires $\mathcal{O}(n^2)$ operations (error analysis is given in \cite{zavalani2023highorder}), while the Clenshaw-Curtis method uses $\mathcal{O}(n\log n)$ operations with the \emph{Discrete Cosine Transform} for evaluation. We estimate the error of the latter: 

%

\begin{theorem}\label{C_C}
Consider a $C^r$ surface $S$, where $r \geq 1$, with $r$-regular quadrilateral re-parametrization $\varphi_i : \square_2 \to S$,
\[
\varphi_i = \varrho_i \circ \sigma = \pi_i \circ \tau_i\circ \sigma, \quad i=1,\dots,K.
\]
Let $\mathrm{p}\in P$, and $\omega_\mathrm{p}$ be the points and weights of the tensorial $(n+1)$-order Clenshaw-Curtis quadrature rule,  $f : S \to \R$ be a function with absolutely continuous derivatives up to order $(r-1)$ and bounded variation $\|f^{(r)}\|_T = V < +\infty$, such that $f$ induces a negligible "remainder of the remainder" \cite{elliott2011estimates}. Then the integration error can be estimated as 
\begin{equation}\label{main_cc}
\mathcal{E}_{f} = \sum_{i=1}^K\int_{\square_2} f(\varphi(\mathrm{x}))g_i\left(\mathrm{x}\right)d\mathrm{x}
- \sum_{i=1}^K \sum_{\mathrm{p} \in P}\omega_{\mathrm{p}}f(\varphi_i(\mathrm{p})) g_i\left(\mathrm{p}\right) \leq \frac{128 \tilde V}{15\pi r(n-r)^{r}}, 
\end{equation}
where $g_i\left(\mathrm{x}\right)=\sqrt{\det((DQ_{d,k}\varphi_i(\mathrm{x}))^T DQ_{d,k}\varphi_i(\mathrm{x}))}$ and $\tilde V$ is defined as
\begin{equation}\label{eq.var}
\tilde V = \max_{i=1,\ldots,K}\max_{\alpha,\beta\leq r}\left\| \frac{\partial^{\alpha+\beta}}{\partial x^\alpha \partial y^\beta}\Big(f(\varphi_i(\mathrm{x}))g_{i}(\mathrm{x})\Big) \right\|_T\; \text{with}\; \|\xi\|_{T} = \left\|\frac{\xi'}{\sqrt{1-x^2}}\right\|_{1}.
\end{equation}
\end{theorem}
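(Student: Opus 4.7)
The plan is to treat each quadrilateral patch separately and to reduce the estimate to a two-dimensional tensor-product Clenshaw-Curtis error on $\square_2$, for which the univariate bound of \cite{elliott2011estimates} can be invoked in each coordinate direction. Since $\varphi_i$ is $C^r$ on the interior of $\square_2$ and the approximate metric factor $g_i$ is smooth (it is assembled from the polynomial interpolant $Q_{G_{2,k}}\varphi_i$ via the spectral differentiation matrices $D_{\mathrm x}$ and $D_{\mathrm y}$, each of class $C^\infty$), the composite $F_i(\mathrm x):=f(\varphi_i(\mathrm x))\,g_i(\mathrm x)$ inherits enough regularity so that each mixed Chebyshev-weighted variation appearing in \eqref{eq.var} is finite.

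Next I would apply the standard tensor-product ``add and subtract'' decomposition
\[
I_{\square_2}[F_i]-I^{\mathrm{CC}}_{\square_2}[F_i]
= E^{\mathrm{CC}}_x\!\bigl[I_y F_i\bigr]+I^{\mathrm{CC}}_x\!\bigl[E^{\mathrm{CC}}_y F_i\bigr],
\]
where $I_y$ is exact integration in $y$, $I^{\mathrm{CC}}_x$ the Clenshaw-Curtis rule in $x$, and $E^{\mathrm{CC}}_x, E^{\mathrm{CC}}_y$ the corresponding one-dimensional errors. To each of the two terms I apply the univariate Clenshaw-Curtis estimate $|E^{\mathrm{CC}}_n(h)|\le 32\|h^{(r)}\|_T/[15\pi r(n-r)^r]$ from \cite{elliott2011estimates}. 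For the first term, differentiation commutes with $y$-integration, so the relevant $T$-norm is controlled by $\max_{y}\|\partial_x^r F_i(\cdot,y)\|_T$ multiplied by the length of $[-1,1]$; for the second, the positivity of the Clenshaw-Curtis weights together with $\sum_{\mathrm p\in P}\omega_{\mathrm p}=2$ absorbs the outer quadrature $I^{\mathrm{CC}}_x$ into another factor of two. In both cases the remaining one-dimensional Chebyshev-weighted variation is dominated by $\tilde V$, and collecting the factors produces the coefficient $4\cdot 32/15=128/15$ written in \eqref{main_cc}; the final sum over the $K$ patches carries $\tilde V$ into its stated uniform-over-elements form.

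In my view the two places requiring genuine care---rather than routine constant-chasing---are the following. First, the regularity transfer from $f$ to $F_i$ through the chain rule for $\varphi_i\in C^r$ and the product rule with $g_i$ produces mixed partial derivatives which must fit inside the precise envelope $\alpha,\beta\le r$ of \eqref{eq.var}; this has to be checked so that the application of the one-dimensional bound in each direction does not call for any higher derivative than is furnished by the hypotheses. Second, the assumption in \cite{elliott2011estimates} that the ``remainder of the remainder'' of the Clenshaw-Curtis expansion is negligible is stated for $f$ in the theorem; one must verify that it propagates to each composite $F_i$ after composition with $\varphi_i$ and multiplication by the polynomial density $g_i$, after which the rest of the argument reduces to the book-keeping of constants and the final summation over $i=1,\dots,K$.
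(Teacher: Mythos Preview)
Your proposal follows the same overall strategy as the paper: reduce the tensor-product Clenshaw--Curtis error on $\square_2$ to one-dimensional errors via a Fubini-type splitting, apply the univariate bound $|R_n[\theta]|\le \dfrac{32V}{15\pi r(n-r)^r}$ in each direction, and then collect the factors of $2$ coming from the length of $[-1,1]$ (resp.\ from $\sum_{\mathrm p}\omega_{\mathrm p}=2$) to obtain $128/(15\pi)$.

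The one genuine difference is the decomposition you use. The paper expands $I^{(2)}=I_yI_x$ fully through $I=R_n+Q_n$ in both variables, obtaining
\[
R^{(2)}_n[\psi]=R_{n,y}[R_{n,x}[\psi]]+Q_{n,x}[R_{n,y}[\psi]]+Q_{n,y}[R_{n,x}[\psi]],
\]
then \emph{drops} the first ``remainder of the remainder'' term by assumption and further replaces $Q_{n,x},Q_{n,y}$ by $I_x,I_y$ heuristically (``$Q_{n,\beta}\approx I_\beta$ for large $n$''). Your telescoping identity
\[
I_xI_y-Q_xQ_y=E_x[I_y\,\cdot\,]+Q_x[E_y\,\cdot\,]
\]
is exact, so no remainder-of-remainder term ever appears and no $Q\approx I$ step is needed; the positivity of the Clenshaw--Curtis weights then handles the outer $Q_x$ rigorously. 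In that sense your route is slightly cleaner than the paper's, and your closing remark about having to ``verify that the remainder-of-the-remainder assumption propagates to $F_i$'' is in fact unnecessary with your own decomposition---that hypothesis is only required by the paper's four-term expansion, not by your two-term identity.
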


\begin{proof}
We denote $\psi(x,y):=f(\varphi(\mathrm{x}))g_i\left(\mathrm{x}\right)$ and 
\begin{equation}
R^{(2)}_n [ \psi ] = I^{(2)}[ \psi ] - Q^{(2)}_n [ \psi ] 
\end{equation}
as the remainder of the exact integral and the $n+1$-order quadrature rule
\begin{equation}
I^{(2)}[ \psi ] = \int_{\square_2} \psi(x, y) \, dx \, dy, \quad Q^{(2)}_n [ \psi ] = \sum_{j=0}^{n} \sum_{i=0}^{n} \psi(x_i, y_j) \, w_i \, w_j  \,.
\end{equation}
For a bivariate function $\psi(x, y)$, $I_x[\psi] = \int_{-1}^{1} \psi(x, \cdot) \, dx$ denotes the integration with respect to the $x$ variable only, yielding a function of $y$. The subscript notation extends to $R_{n,x}[\psi]$ and $Q_{n,y}[\psi]$, and when replacing the roles of $x$ and $y$. Fubini's theorem \cite{Brezis2010FunctionalAS} implies:
\begin{equation}\label{int_1}
I^{(2)}[\psi] = \int_{\square_2} \psi(x, y) \, dx \, dy = \int_{-1}^{1} I_x[\psi] \, dy = I_y[I_x[\psi]]\,.
\end{equation}
Upon substitution into Eq.~\eqref{int_1}, we obtain
\begin{align}\label{eq.20}
I^{(2)}[\psi] &= \nonumber
I_y[R_{n,x}[\psi] + Q_{n,x}[\psi]] \\ \nonumber
&=
R_{n,y}[R_{n,x}[\psi] + Q_{n,x}[\psi]] + Q_{n,y}[R_{n,x}[\psi] + Q_{n,x}[\psi]] \nonumber\\
&= 
R_{n,y}[R_{n,x}[\psi]] + Q_{n,x}[R_{n,y}[\psi]] + Q_{n,y}[R_{n,x}[\psi]] + Q_{n,y}[Q_{n,x}[\psi]] 
\end{align}
Following \cite{elliott2011estimates} we assume
the “remainder of a remainder” -- first term in Eq.~\eqref{eq.20} to contribute negligibly to the error, enabling us to establish a sufficiently tight upper bound:

For large $n$, the quadrature rule approaches the value of the integral, i.e., $Q_{n,\beta} \approx I_\beta$ for $\beta = x, y$, we are left with:
\begin{equation}
     I^{(2)}[\psi] \approx I_x[R_{n,y}[\psi]] + I_y[R_{n,x}[\psi]] + Q^{(2)}_n[\psi] \,.
\end{equation}
Hence, 
\begin{equation}\label{int_2}
R^{(2)}_n[\psi] \leq I_x[R_{n,y}[\psi]] + I_y[R_{n,x}[\psi]],
\end {equation}

As noted in \cite{Trefethen2019}, considering a  function $\theta(x)$ defined on the interval $[-1, 1]$, when computing $Q_n[\theta]$ using Clenshaw-Curtis quadrature for $\theta\in C^r$ and $\|\theta^{(r)}\|_{T}<V$ for a real finite value $V$, then for sufficiently large $n$, the subsequent inequality  holds:
\begin{equation}\label{res_tren}
    R_n[\theta]\leq \frac{32 V}{15\pi r(n-r)^{r}}
\end{equation}

Consequently, applying Eq.~\eqref{res_tren} to Eq.~\eqref{int_2} yields
\begin{equation}
R^{(2)}_n[\psi] \leq \frac{32}{15\pi r(n - r)^{r}}[ I_x[V_y(x)] + I_y[V_x(y)]]\,,
\end{equation} 
where \(V_y(x)= \max_y \|\psi^{(r)}(x,y)\|_T \) and \(V_x(y)= \max_x \|\psi^{(r)}(x,y)\|_T \) for fixed \( x \) and \( y \), respectively.

By utilizing the definition of the variation \( \tilde{V} \) in Eq.~\eqref{eq.var},
we are left with Eq.~\eqref{main_cc}.  Applying  Clenshaw-Curtis quadrature leads to an error approximation for a 2D surface integral with an order of $\Oc\big(n^{-r}\big)$, where $n$ denotes the quadrature's order.
\end{proof}

\section{Numerical experiments}\label{sec:NUM}
We demonstrate the effectiveness of $\text{HOSQ-CC}$ (utilizing Clenshaw-Curtis quadrature) for surface integration through two set of numerical examples.  We provide the initial flat surface triangulations by using Persson and Strang's algorithm~\cite{Persson} and subsequently enhance them to curved triangulations using Euclidean closest-point projections $\pi_i : T_i \to S$.
Our implementation of HOSQ is part of a {\sc Python} package {\sc surfpy} 
\footnote{available  at \url{https://github.com/casus/surfpy}
}.
The examples and results of this manuscript using \textsc{Dune-CurvedGrid}  
are summarized and made available in the repository.%
\footnote{\url{https://github.com/casus/dune-surface_int}
}
%

We compare $\text{HOSQ-CC}$ with \textsc{Dune-CurvedGrid} integration algorithm (DCG) from the surface-parametrization module \texttt{dune-curvedgrid} \cite{CurvedGrid}, part of  the \textsc{Dune} finite element framework\footnote{\url{www.dune-project.org}}. As outlined in \cite{zavalani2024note}, DCG performs total degree interpolation in uniform (equidistant midpoint-rule) triangle-nodes for interpolating the closest-point projection on each triangle, which makes it sensitive to Runge's phenomenon (overfitting).

\begin{figure}[t!]

\begin{subfigure}{.45\textwidth}
  \centering
  \includegraphics[clip,width=0.9\columnwidth]{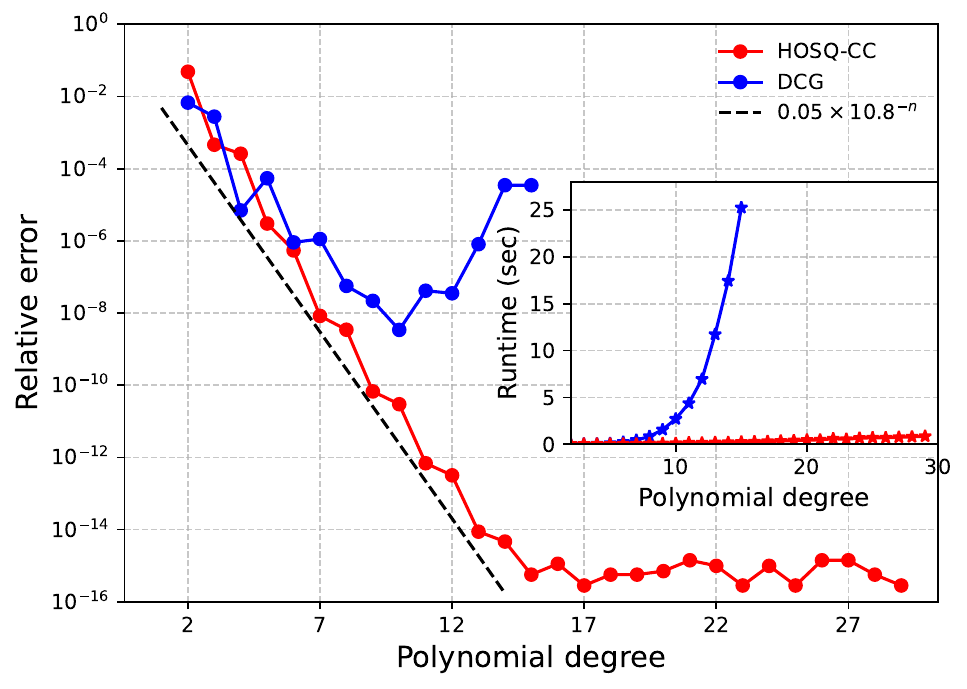}
  \caption{Unit sphere}
  \label{fig:US3}
\end{subfigure}%
\hfill
\begin{subfigure}{.45\textwidth}
  \centering
 \includegraphics[clip,width=0.9\columnwidth]{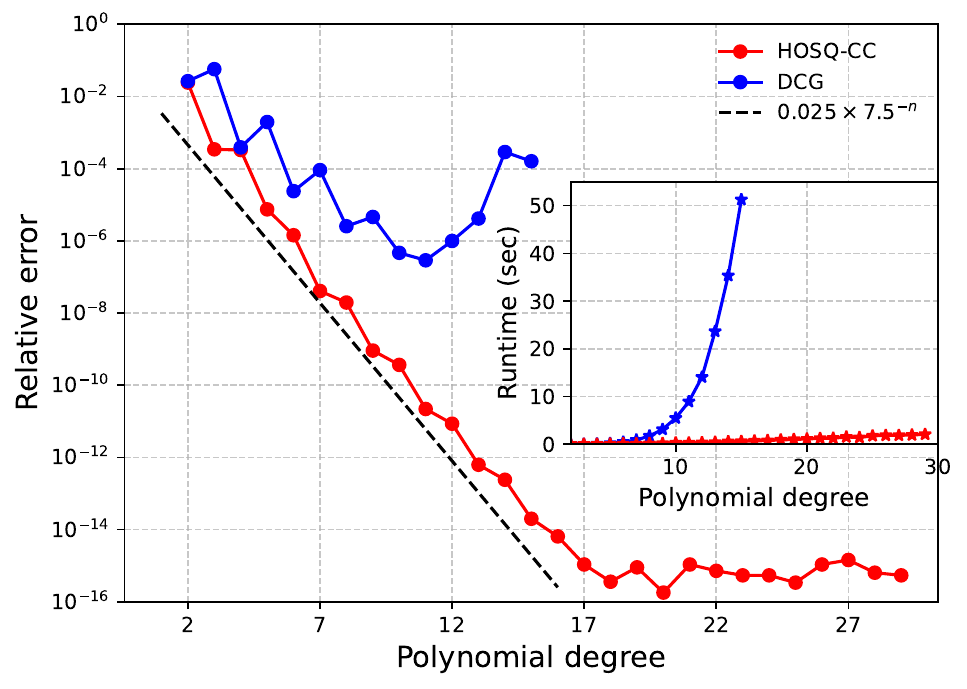}
  \caption{Torus with radii $R=2$ and $r=1$}
  \label{fig:US4}
\end{subfigure}
\caption{Relative errors and runtimes between DCG and $\text{HOSQ-CC}$ computing the surface area of the unit sphere (\ref{fig:US3}) and torus (\ref{fig:US4}) respectively.}
\label{fig:R}
\end{figure}
We start, by addressing integration tasks involving  only constant integrands $f : S \to \R$. 

\begin{experiment}[Area computation]\label{exp:1}
We integrate $f=1$ over the unit sphere $\mathrm{S}^2$ and the torus $T^2_{r,R}$ with inner radius $r=1$ and outer radius $R=2$. The surface areas are given due to $4\pi r^2$ and $4\pi^2 Rr$, respectively, enabling measuring the relative errors.
\end{experiment}

We choose initial triangulations of size $N_{\Delta}=124$  for the sphere and of size $N_{\Delta}=256$ for the torus and apply Clenshaw-Curtis quadrature\cite{Trefethen2019}, with a degree matching the geometry approximation.  We use symmetric Gauss quadrature on a simplex with a matching degree for the geometry approximation in the case of DCG. 

$\text{HOSQ-CC}$ exhibits stable convergence to machine precision with exponential rates, $0.05\times 10.8^{-n}$ and $0.025\times 7.5^{-n}$ fitted for the sphere and torus, respectively,  in accordance with the predictions from Theorem \ref{C_C}. In contrast, DCG becomes unstable for orders larger than $\deg =10$. We interpret the instability as the appearance of Runge's phenomenon caused by the choice of midpoint-triangle-refinements yielding equidistant interpolation nodes for DCG. 

In terms of execution time, the {\sc Python} implementation of $\text{HOSQ-CC}$, empowered by spectral differentiation, outperforms the $\text{C}++$ implementation of DCG.  Importantly, experimental findings reveal that DCG encounters constraints for $\deg \geq 15$, attributable to the formation of a singular matrix during the basis computation.

\begin{experiment}[Gauss-Bonet validation]
We consider the Gauss curvature as a non-trivial
integrand. 
Due to the Gauss--Bonnet theorem \cite{spivak1999}, integrating the Gauss curvature over a closed surface yields
\begin{equation}\label{eq:GB}
    \int_{S}K_{\mathrm{Gauss}}\,dS=2\pi \chi\left(S\right)\,,
\end{equation}
where $\chi\left(S\right)$ denotes the Euler characteristic of the surface. Here, we consider: \\

\noindent
\begin{tabular}{lll}
1) &  Dziuk's surface&  $(x-z^2)^2+y^2+z^2-1 = 0$\\
2) & Double torus & $\big(x^2+y^2)^2-x^2+y^2\big)^2+z^2-a^2=0$ ,\quad  $a \in \R\setminus\{0\}$\\
\end{tabular}

The Gauss curvature is computed symbolically from the
implicit surface descriptions using \textsc{Mathematica~11.3}, enabling measuring errors of DCG and $\text{HOSQ-CC}$ when integrating the Gauss curvature.
We maintain the experimental design outlined in { \sc Experiment}~\ref{exp:1} and display error plots based on the polynomial degree in Fig.\ref{fig:SURF1} and Fig.\ref{fig:SURF2}.
\end{experiment}

\begin{figure}[!htbp]
\begin{subfigure}{.45\textwidth}
\vfill
\centering
\vspace{-0.5cm}
\includegraphics[scale=0.4]{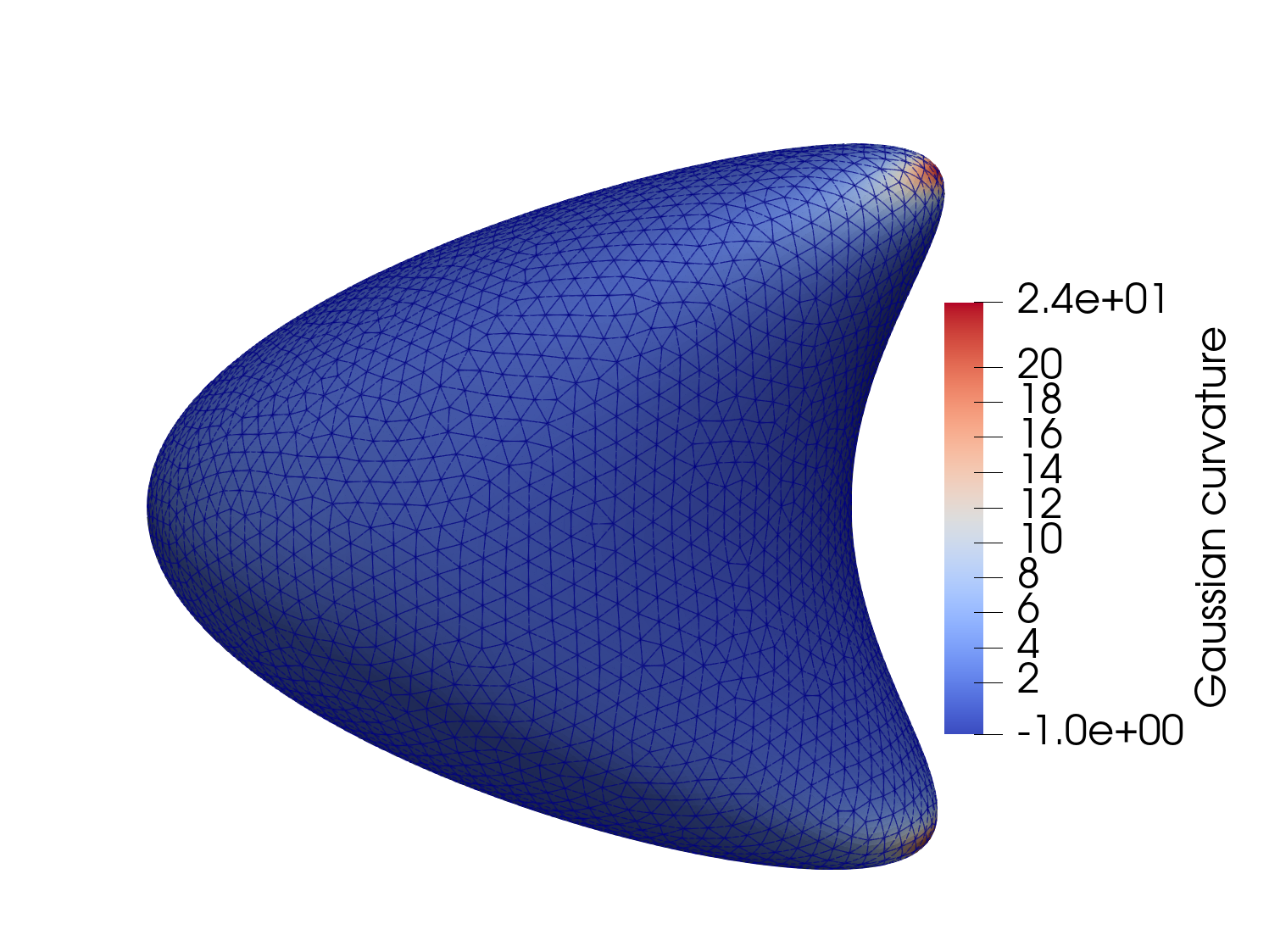}
\caption{Dziuk's surface with $8088$ triangles}
\label{gc.e}
\end{subfigure}
\hfill
\begin{subfigure}{.495\textwidth}
 \includegraphics[width=0.9\textwidth]{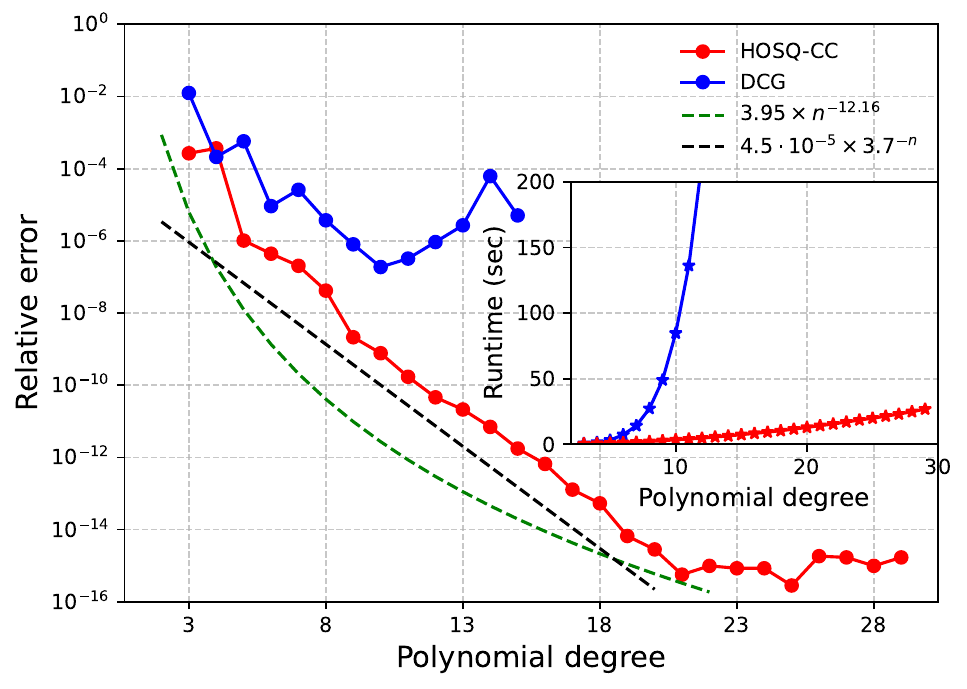}
  \caption{Integration errors and and runtimes}
  \label{r1.f}
\end{subfigure}%
\caption{Gauss-Bonnet validation for Dziuk's surface.}
\label{fig:SURF1}
\end{figure}
\begin{figure}[!htbp]
\begin{subfigure}{.45\textwidth}
\centering
\includegraphics[scale=0.35]{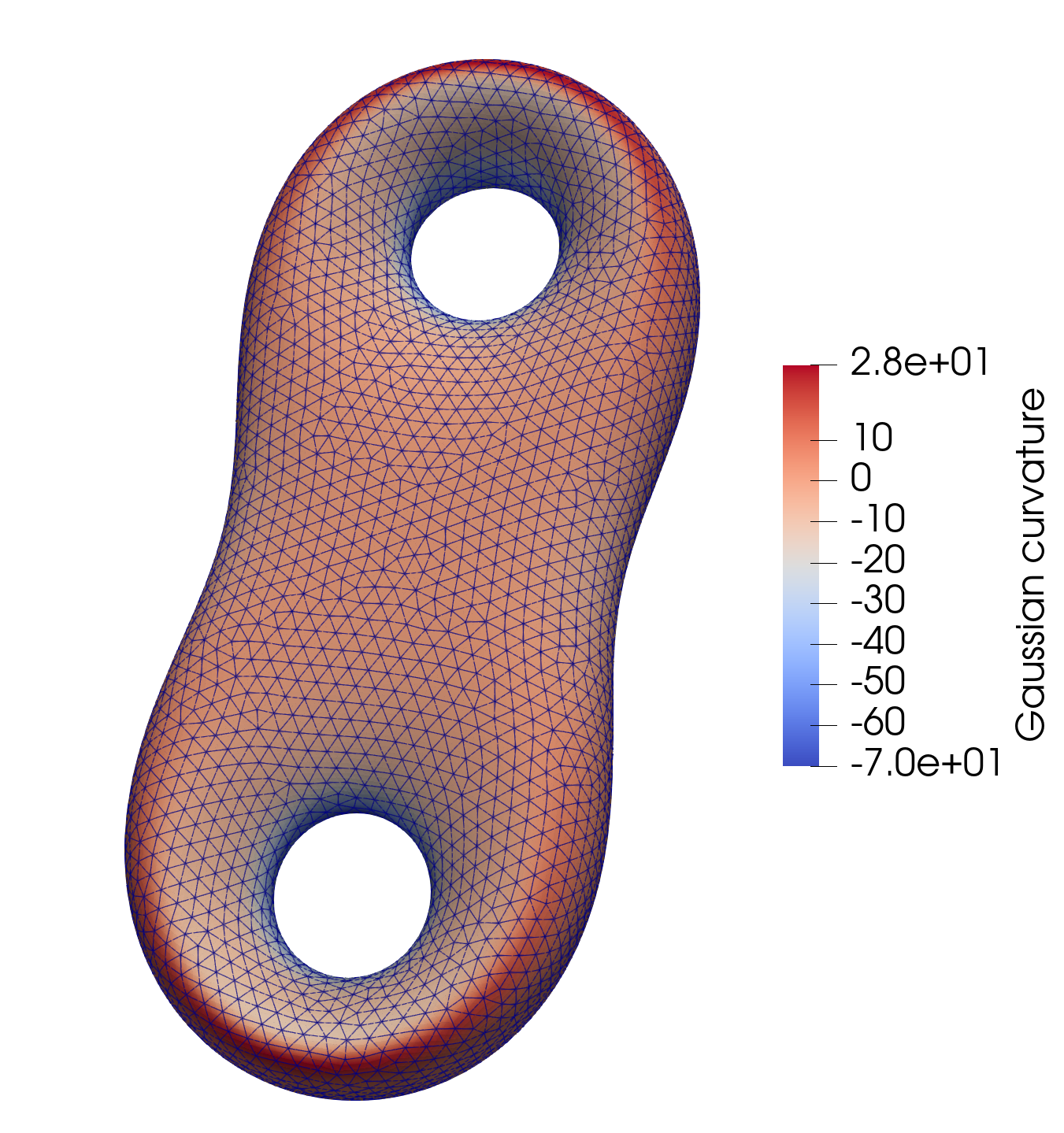}
\caption{Double torus, with $8360$ triangles and $a=0.2$}
\label{gc.d}
\end{subfigure}
\hfill
\begin{subfigure}{.495\textwidth}
 \includegraphics[width=0.9\textwidth]{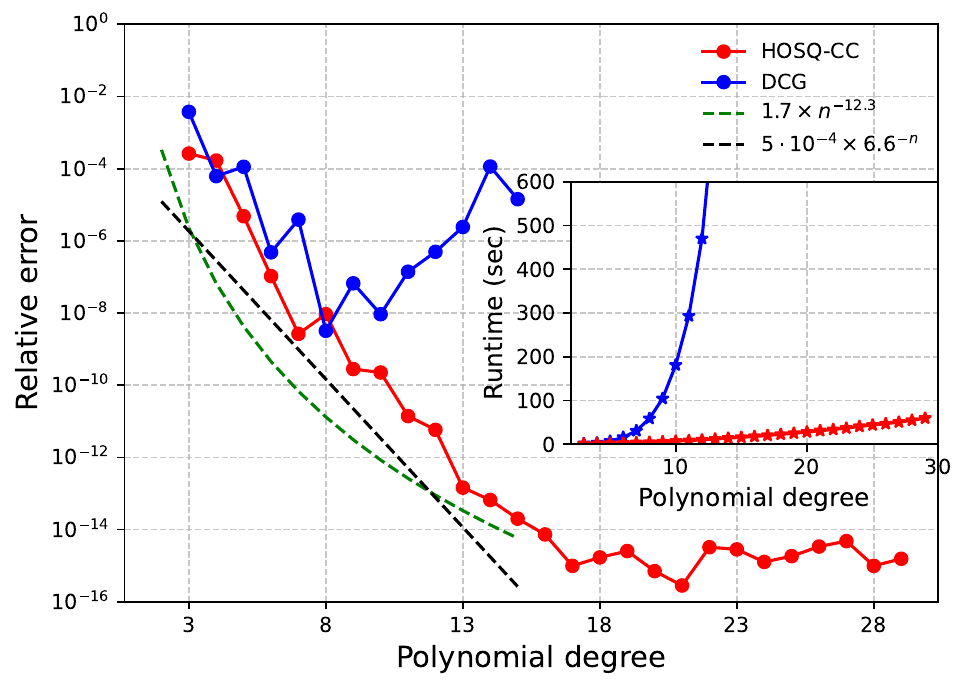}
  \caption{Integration errors and and runtimes}
  \label{r1.e}
\end{subfigure}
\caption{Gauss-Bonnet validation for a double torus.}
\label{fig:SURF2}
\end{figure}

The $\text{HOSQ-CC}$ consistently exhibits exponential convergence to the accurate value $2\pi\chi(S)$ with exponential rates, $4.5\cdot10^{-5}\times 3.7^{-n}$ and $5\cdot10^{-4}\times 6.6^{-n}$ fitted for the for the Dziuk's surface and the double torus, respectively. The best fit of an algebraic rate, $3.95\times n^{-12.16}$ for the Dziuk's surface and $1.7\times n^{-12.3}$ for the double torus, does not assert rapid convergence. As the order increases, the error in $\text{HOSQ-CC}$ tends to stabilize near the level of machine precision. In contrast, DCG fails to achieve machine-precision approximations across all cases and becomes unstable for degrees $k \geq 10$. In terms of execution time, once more, $\text{HOSQ-CC}$, empowered by spectral differentiation, outperforms DCG.
\section{Conclusion}
The present HOSQ integration approach, utilizing the innovative \emph{square-squeezing} transformation excels in speed, accuracy, and robustness for integration task on complex geometries, suggesting its application potential for triangular spectral element methods (TSEM) \cite{karniadakis2005spectral,heinrichs2001spectral} and fast spectral PDE solvers on surfaces \cite{fortunato2022high}.


\section*{Acknowledgement}
This research
was partially funded by the Center of Advanced Systems Understanding (CASUS), which is financed by Germany’s Federal Ministry of Education and Research (BMBF) and by the Saxon Ministry for Science, Culture, and Tourism (SMWK) with tax funds on the basis of the budget approved by the Saxon State Parliament.

\bibliographystyle{abbrv}
\bibliography{references}

\end{document}